\theoremstyle{plain}
\newtheorem{theorem}{Theorem}
\newtheorem{corollary}{Corollary}
\newtheorem{proposition}{Proposition}
\theoremstyle{definition}
\newtheorem{condition}{Condition}
\theoremstyle{remark}
\newtheorem{remark}{Remark}
\def\lra{\longrightarrow}
\def\a{\alpha}
\def\ka{\varkappa}
\def\cN{{\mathcal{N}}}
\def\cE{{\mathcal{E}}}
\def\ga{\gamma}
\def\dt{\delta}
\def\e{\varepsilon}
\def\si{\sigma}
\def\ph{\varphi}
\newcommand{\BL}{\biggl}
\newcommand{\BR}{\biggr}
\def\wt{\widetilde}
\let\rom\textup
\begin{document}

\author{M.~G.~Shur}

\title[Strong ratio limit theorems associated with random walks]%
{Strong ratio limit theorems associated\\ with random walks on
unimodular groups}

\address{Moscow Institute of Electronics and Mathematics,\newline
\indent National Research University Higher School of Economics}

\date{\today}

\keywords{unimodular group, random walk on a
group, strong ratio limit theorem}

\subjclass[2010]{60B15}

\begin{abstract}
Strong ratio limit theorems associated with a broad class of spread
out random walks on unimodular groups were proved in the preceding
paper~\cite{9}, where these random walks were assumed to have the
convergence parameter $R=1$. In the present paper, we study the
case of an arbitrary $R\ge1$ and clarify the role of the condition
that the group is unimodular.
\end{abstract}

\maketitle

\section{Introduction}\label{s1}

The final section of the paper~\cite{9} contains strong ratio limit
theorems (SRLT) associated with a broad family of random walks on
unimodular groups. The conditions imposed in~\cite{9} on these
random walks in particular guarantee that they are irreducible in
the sense of~\cite{3} and the convergence parameter $R$ of each of
these random walks satisfies $R=1$. (Following~\cite{9}, we mainly
borrow the probability part of our terminology from~\cite{3,6} and
the algebraic part, from~\cite{7}.) The aim of the present paper is
to transfer these results in~\cite{9} to the case of arbitrary
$R\ge1$.

In this connection, recall that the SRLT suggested in~\cite{9} are
slightly different from the traditional ones (e.g.,
cf.~\cite{1,2,3,4}). As long as some SRLT of the traditional kind
can be applied to a random walk $X=(X_n;n\ge0)$ on a locally
compact group $E$, this SRLT can be used to obtain information
about the existence and values of limits of the form
\begin{equation}\label{eq1}
    \lim_{n\to\infty}\frac{\ka(P^{n+m}f)}{\mu(P^ng)},\qquad m\ge0,
\end{equation}
where $P$ is the transition operator of the random walk and the
Borel functions $f,g\ge0$ defined on $E$, together with the
probability measures $\ka$ and $\mu$ defined on the set of all
Borel subsets of $E$, satisfy appropriate
conditions~\cite{1,2,3,4}. (Here and in what follows, we write,
say, $\ka(f)=\int f\,d\ka$ if the integral exists.) However, many
authors only consider the case in which these measures are
concentrated at some point $x\in E$ and hence the ratio occurring
in~\eqref{eq1} coincides with $P^{n+m}f(x)/P^ng(x)$ (e.g.,
see~\cite{1}).

Of the conditions included in the traditional SRLT, those implying
the existence of limits as $n\to\infty$ of ratios of the form
$\ka(P^{n+1}f)/\ka(P^nf)$ for one or maybe all admissible functions
$f$ play a special role (see \cite{1,2,3,4}). The verification of
such conditions (apart from the case of symmetric random
walks~\cite{8}) often encounters serious difficulties. To avoid
such conditions as much as possible, we considered SRLT of a~more
general type in~\cite{9}. In these theorems, limits of the
form~\eqref{eq1} are replaced by some analogs in which the integer
parameter $n$ tends to infinity avoiding some set $\cN$ of density
$0$, that is, a set $\cN$ of positive integers such that
\begin{equation}\label{eq2}
    \lim_{n\to\infty}\BL[\frac1n q_n(\cN)\BR]=0,
\end{equation}
where $q_n(\cN)$, $n\ge0$, is the number of elements $m\in\cN$ that
do not exceed~$n$. These SRLT were referred to as SRLT with
exceptional parameter sets in~\cite{9}.

The main results of the present paper (see Theorems~\ref{th1}
and~\ref{th2}), which deal with spread out irreducible random walks
on unimodular groups, are SRLT of this kind as well.
Following~\cite{9,10}, we assume that for each of the random walks
in question there exists a unique (up to a factor) $R$-invariant
measure taking finite values on compact sets, where $R$ has the
same meaning as above, but now the case of $R\ne1$ is of main
interest. The proofs of Theorems~\ref{th1} and~\ref{th2} use the
results in~\cite{9,10}. As to the assumption that the groups
considered are unimodular, it was also introduced in~\cite{9}, but
in the present paper we establish that it is necessary in our
theory (see Theorem~\ref{th3}).

In conclusion of the introduction, let us give some notation and
definitions. In what follows, $E$ is a locally compact group with
countable base, $\cE$ is the Borel $\si$-algebra on $E$, and $\pi$
is the right Haar measure on $\cE$; we also set
$\cE_+=\{A\in\cE\colon\pi(A)>0\}$. The group operation on $E$ is
interpreted as multiplication.

On $E$, we specify a random walk $X=(X_n;n\ge0)$ with a law $v$,
where $v$ is a probability measure on $\cE$, and a transition
operator~$P$. (Thus, $X$ is a homogeneous Markov chain on the
measurable space $(E,\cE)$ with transition probabilities
$p(x,A)=v(x^{-1}A)$, $x\in E$, $A\in\cE$.) If, for some positive
integer $n$, the convolution power $v^n$ is nonsingular with
respect to $\pi$, then $X$ is called a \textit{spread out random
walk}. As is shown by \cite[Proposition~1]{10}, such a random walk
is irreducible in the traditional sense~\cite{5} if and only if it
is irreducible with respect to the measure~$\pi$ in the sense of
the theory of irreducible Markov chains~\cite{3}. In what follows,
unless specified otherwise, the irreducibility of random walks is
understood in the traditional sense.

\section{Auxiliary assertions}\label{s2}

The main aim of this section is to prove the following assertion,
which, in particular, contains a full description of small
functions and measures~\cite{3} for a broad class of random walks.

\begin{proposition}\label{p1}
Let a random walk $X$ be spread out, irreducible, and aperiodic in
the sense of~\rom{\cite{3}}. Let $S_0$ be the family of all bounded
Borel functions $f\lra[0,\infty)$ such that the set $\{x\in E\colon
f(x)>0\}$ belongs to $\cE_+$ and is relatively compact. Then
\begin{enumerate}
    \item[\rom{(a)}] Each function $f\in S_0$ and each measure of
    the form $\mu=f\pi$ with $f\in S_0$ are small for $X$. \rom(By
    definition, $f\pi(A)=\int_Af\,d\pi$, $A\in\cE$.\rom)
    \item[\rom{(b)}] If the law $v$ of $X$ is compactly supported,
    then, conversely, all functions and measures small for $X$ have
    the form described in~\rom{(a)}.
\end{enumerate}
\end{proposition}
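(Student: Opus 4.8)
The plan is to prove (a) by exhibiting explicit minorizations $P^m(x,A)\ge s(x)\nu(A)$, and to prove (b) by showing that the very existence of such a minorization, together with the compact support of $v$ and the invariance of $\pi$, forces the asserted shape. Throughout I use the homogeneity $p^m(gx,gA)=v^m(x^{-1}A)=p^m(x,A)$ and the fact that on a unimodular group $\pi$ is both left- and right-invariant and inversion-invariant.

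The starting input for (a) is a smoothing estimate. Since $X$ is spread out, some power $v^n$ is nonsingular, so $v^n\ge\e\,\mathbb 1_D\pi$ as measures for a compact $D$ with $\pi(D)>0$ and some $\e>0$. Convolving and using left-invariance to compute densities, $v^{2n}\ge\e^2(\mathbb 1_D*\mathbb 1_D)\pi$, and $g:=\e^2(\mathbb 1_D*\mathbb 1_D)$ is a continuous, nonnegative, not identically zero function (a convolution of two compactly supported $L^2$-functions), so $\{g>0\}$ is a nonempty open set; hence $p^{2n}(x,A)\ge\int_A g(x^{-1}y)\,d\pi(y)$. By continuity and positivity of $g$ I then pick compact sets $C_0,W$ with nonempty interior and a $\dt>0$ with $g(x^{-1}y)\ge\dt$ whenever $x\in C_0,\ y\in W$, which yields $p^{2n}(x,\bcdot)\ge\dt\,\pi|_W$ for all $x\in C_0$. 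Thus $C_0$ is a small set and $\pi|_W$ a small measure, and by homogeneity the same holds, at the same power $2n$, for every left translate $gC_0$ and $\pi|_{gW}$. Given any relatively compact $C\in\cE_+$, I cover the compact set $\overline C$ by finitely many translates $g_iC_0$ (resp.\ $g_i'W$), using the nonempty interiors; invoking that for an irreducible aperiodic chain finite unions of small sets are small and finite sums of small measures are small (cf.~\cite{3}), $\overline C$ is small and $\pi|_C$ is a small measure. For $f\in S_0$ with $C:=\{f>0\}$ bounded by $M$ one has $f\le M\mathbb 1_{\overline C}$ and $f\pi\le M\pi|_{\overline C}$, and since anything dominated by a small function (resp.\ measure) is again small, part (a) follows.

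For the converse (b), let $f$ be small, $p^m(x,A)\ge f(x)\nu(A)$ with $\nu$ nontrivial. Choosing $A_0$ with $0<\nu(A_0)<\infty$ gives $f\le1/\nu(A_0)$, so $f$ is bounded; and $\pi(\{f>0\})>0$, because for a spread out $\pi$-irreducible walk the maximal irreducibility measure is equivalent to $\pi$ (cf.~\cite{3,10}), whence $\{f>0\}\in\cE_+$. If $\mathrm{supp}\,v\subseteq Q$ with $Q$ compact, then $p^m(x,\bcdot)$ is carried by the compact set $xQ^m$; fixing $a_0\in\mathrm{supp}\,\nu$, the inequality forces $v^m(x^{-1}V)>0$ for every neighbourhood $V$ of $a_0$ whenever $f(x)>0$, hence $a_0\in xQ^m$, i.e.\ $x\in a_0(Q^m)^{-1}$, so $\{f>0\}$ is relatively compact and $f\in S_0$. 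For a small measure $\mu$, from $p^m(x,A)\ge s(x)\mu(A)$ I integrate in $x$ against $\pi$ and use right-invariance in the form $\int_E v^m(x^{-1}A)\,d\pi(x)=\pi(A)$ to get $\mu(A)\int s\,d\pi\le\pi(A)$; since $\int s\,d\pi>0$, this gives $\mu\le(\int s\,d\pi)^{-1}\pi$ in one stroke, so $\mu=f\pi$ with $f$ bounded. Finally $\mu$ is carried by the compact set $x_0Q^m$ for any $x_0\in\{s>0\}$, and $\mu\neq0$, so $f\in S_0$, completing (b).

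I expect the main obstacle to be the combination step in (a): manufacturing, for an \emph{arbitrary} relatively compact $C$, a \emph{single} minorization (one power, one measure and one function) valid on all of $\overline C$. The smoothing estimate plus translation reduces this to a finite covering, but collapsing the finite family into one small set and one small measure genuinely uses the $\psi$-irreducible aperiodic theory and the equivalence of the maximal irreducibility measure with $\pi$ — the mechanism that controls the singular parts of the $v^n$. On the measure side one may avoid quoting ``sums of small measures are small'' by instead pushing the base small measure $\pi|_W$ forward $j$ steps until the absolutely continuous density of $(\pi|_W)P^{j}$ dominates a positive constant on $\overline C$, but this is the same aperiodicity input in another guise. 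By comparison, the converse (b) is largely mechanical once the invariance identity $\int_E v^m(x^{-1}A)\,d\pi(x)=\pi(A)$ is used to bound $\mu$ by a multiple of $\pi$.
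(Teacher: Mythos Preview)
Your argument is essentially correct, and for part~(a) it is the same as the paper's: one exhibits a single small set with nonempty interior (the paper quotes \cite[Proposition~3]{10} for this, you build it by hand from the spread-out hypothesis), covers an arbitrary compact by finitely many translates, and invokes aperiodicity via \cite[Corollary~2.1]{3} to merge.

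For part~(b) the paper takes a shorter, dual route. Instead of the minorization $P^m\ge f\otimes\nu$, it uses the \emph{upper} comparison available for small functions in the aperiodic case: having fixed any $g\in S_0$ (small by~(a)), a small $f$ satisfies $f\le aP^mg$ for some $a>0$, $m\ge1$; since $v$ is compactly supported, $P^mg$ vanishes outside a compact set, and so does $f$. The measure case is handled by the same one-line domination. Your approach via the minorization and the invariance identity $\int v^m(x^{-1}A)\,d\pi(x)=\pi(A)$ works too and even yields the explicit bound $\mu\le(\int s\,d\pi)^{-1}\pi$, but note that you need $\int s\,d\pi<\infty$ for this to say anything; this is not automatic for a small function on a noncompact group, and you should first apply the function case of~(b) to $s$ itself (it is small) to get $s\in S_0$ and hence $\pi$-integrability.

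One caveat: you announce at the outset that you use unimodularity of $E$, but Proposition~\ref{p1} does \emph{not} assume it. Fortunately your proof does not actually need it either: the invariance $\pi P=\pi$ follows from right-invariance of $\pi$ alone, and the continuity of the convolution density in~(a) holds on any locally compact group. You should remove that standing assumption from the write-up.
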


\begin{proof}
By \cite[Proposition~3]{10}, there exists an open set $U\subset E$
such that the set $sU=\{y\in E\colon y=sx,x\in U\}$ and the measure
$1_{sU}\pi$ are small for $X$. (Here the symbol $1_A$ stands for
the indicator function of a set $A\in\cE$.) Given $f\in S_0$, take
a compact set $F\subset E$ such that $f=0$ outside~$F$. Since $E$
is locally compact, it follows that this compact set can be covered
by finitely many sets of the form $sU$, $s\in F$. In view of the
aperiodicity of $X$ and \cite[Corollary~2.1]{3}, we can conclude
that $f$ is a small function, thus proving the part of~(a)
pertaining to small functions. A similar argument gives the
remaining part of~(a).

Now let us proceed to~(b). Fix a small function~$f$. If $g\in S_0$,
then $aP^mg\ge f$ for some $m\ge1$ and $a>0$. Since the measure
$v^m$ is now compactly supported, it follows that $P^mg=0$ outside
some compact set and hence $f=0$ outside the same set. This
justifies the part of~(b) pertaining to small functions. The case
of small measures can be analyzed in a similar way.
\end{proof}

\begin{corollary}\label{cor1}
If the conditions indicated in the first sentence of
Proposition~\rom{\ref{p1}} are satisfied, then every compact set
contained in $\cE_+$ is a small set for~$X$.
\end{corollary}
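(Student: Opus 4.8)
The plan is to recognize the indicator function $1_C$ of the given compact set $C$ as a member of the family $S_0$ and then to invoke part~(a) of Proposition~\ref{p1}. First I would check membership: $1_C$ is a bounded Borel function with values in $[0,\infty)$; its support $\{x\in E\colon 1_C(x)>0\}=C$ is relatively compact, being compact; and it belongs to $\cE_+$ precisely because $C$ is assumed to lie in $\cE_+$, that is, $\pi(C)>0$. Hence $1_C\in S_0$, and by Proposition~\ref{p1}(a) the function $1_C$ is small for $X$.

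It then remains only to translate the smallness of $1_C$ into the statement that $C$ itself is a small set. By the definition of a small function in~\cite{3}, there exist an integer $m\ge1$ (or, more generally, a sampling distribution) and a nontrivial measure $\nu$ on $\cE$ such that the minorization $P^m(x,A)\ge 1_C(x)\,\nu(A)$ holds for all $x\in E$ and $A\in\cE$. Restricting to $x\in C$, where $1_C(x)=1$, yields $P^m(x,A)\ge\nu(A)$ for all such $x$ and all $A\in\cE$, which is exactly the minorization defining a small set. Thus $C$ is small, as required.

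I do not expect any substantive obstacle here: the corollary is essentially a direct reading of Proposition~\ref{p1}(a) through the equivalence between small functions and small sets. The only point requiring a little care is the passage from a possibly sampled-chain form of the minorization, should that be the convention adopted in~\cite{3}, to a single transition power $P^m$; but the aperiodicity hypothesis, already present in the first sentence of Proposition~\ref{p1}, ensures that petite sets and small sets coincide, so this step is routine bookkeeping rather than a genuine difficulty.
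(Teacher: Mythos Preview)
Your proposal is correct and matches the paper's approach: the corollary is stated without proof because it follows immediately from Proposition~\ref{p1}(a) by taking $f=1_C$, and you have simply written out the details of that implication. The extra remarks on the passage from small functions to small sets and on aperiodicity are accurate but more than the paper deems necessary.
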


Now recall that a measure $\mu$ on $\cE$ is said (see~\cite{10} and
many other papers) to be $r$-invariant for $X$, $0<r<\infty$, if
$\mu(E)>0$ and $\mu=r\mu P$, where the measure $\mu P$ is defined
by the formula
\begin{equation*}
    \mu P(A)=\int\rho(x,A)\mu(dx),\qquad A\in\cE.
\end{equation*}

\begin{corollary}\label{cor2}
Under the same assumptions as in Corollary~\rom{\ref{cor1}}, every
$r$-invariant measure $\mu$, $0<r<\infty$, taking finite values on
compact sets is also $r$-invariant in the sense of~\rom{\cite{3}}
\rom(that is, $\mu(A)<\infty$ for at least one and hence for all
small $A\subset E$\rom), and moreover, $\mu(A)>0$ for the same $A$
and for all open $A\subset E$.
\end{corollary}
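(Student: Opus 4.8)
The plan is to treat the two assertions separately, relying only on the operator identity $\mu=r\mu P$, Corollary~\ref{cor1}, and the $\pi$-irreducibility of $X$. For the finiteness claim I would argue that it reduces to exhibiting a single small set of finite $\mu$-measure. Since $E$ is locally compact with countable base and $\pi$ is a Haar measure, I can pick a compact $K$ with $\pi(K)>0$, so that $K\in\cE_+$; by Corollary~\ref{cor1} this $K$ is small, and by hypothesis $\mu(K)<\infty$. Thus $\mu$ is a measure with $\mu=r\mu P$ that is finite on at least one small set, which is precisely the defining property of $r$-invariance in the sense of~\cite{3}. The accompanying implication ``finite on one small set $\Rightarrow$ finite on all small sets'' belongs to that theory, so I would simply invoke it rather than reprove it.

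For the positivity claim I would first reduce both cases to one statement: $\mu(A)>0$ for every $A\in\cE_+$. Every nonempty open $A\subset E$ satisfies $\pi(A)>0$ and hence lies in $\cE_+$, and every small set lies in $\cE_+$ by definition; so it is enough to handle an arbitrary $A\in\cE_+$. Iterating $\mu=r\mu P$ yields $\mu P^n=r^{-n}\mu$, that is,
\begin{equation*}
    \mu(A)=r^n\int P^n(x,A)\,\mu(dx),\qquad n\ge1.
\end{equation*}
I would then argue by contradiction: if $\mu(A)=0$ for some $A\in\cE_+$, then, because $r^n>0$ and the integrand is nonnegative, this identity forces $P^n(\bcdot,A)=0$ $\mu$-almost everywhere for each $n$. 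Intersecting the resulting $\mu$-full sets over the countably many $n\ge1$ gives $\sum_{n\ge1}P^n(x,A)=0$ for $\mu$-almost every $x$. But $\pi$-irreducibility provides $\sum_{n\ge1}P^n(x,A)>0$ for every $x\in E$ as soon as $\pi(A)>0$. These are consistent only if $\mu$ is the null measure, which contradicts $\mu(E)>0$; hence $\mu(A)>0$. Since both the small sets and the open sets of the statement lie in $\cE_+$, this settles the positivity claim for ``the same $A$'' and for all open $A$.

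I expect the only delicate point to be the interface with~\cite{3}: one must make sure that irreducibility is used in its ``for every $x$'' form --- which is legitimate here, since $\pi$-irreducibility (equivalent to traditional irreducibility for spread out walks) yields $\sum_n P^n(x,A)>0$ for all $x$ whenever $\pi(A)>0$ --- and that the notion of small set in~\cite{3} indeed entails membership in $\cE_+$. Granting these standard facts, the finiteness assertion rides entirely on Corollary~\ref{cor1} and the hypothesis on compact sets, while the positivity assertion is just the elementary iteration-and-irreducibility argument above, so no further computation is needed.
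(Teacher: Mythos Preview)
Your argument is correct. The finiteness part coincides with the paper's: both pick a compact set in $\cE_+$, invoke Corollary~\ref{cor1} to see it is small, and use the hypothesis $\mu(\text{compact})<\infty$, so that $\mu$ is finite on at least one small set.

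For positivity the two proofs diverge. The paper simply cites \cite[Proposition~5.6]{3} to obtain $0<\mu(A)<\infty$ for every small $A$ in one stroke, and then handles an open $B$ by inner regularity: $B$ contains a compact $A_1\in\cE_+$, which is small by Corollary~\ref{cor1}, hence $\mu(B)\ge\mu(A_1)>0$. You instead prove $\mu(A)>0$ for every $A\in\cE_+$ directly from the iterated identity $\mu=r^n\mu P^n$ together with $\pi$-irreducibility, and then observe that both small sets and nonempty open sets lie in $\cE_+$. Your route is more self-contained---it essentially re-derives the positivity half of \cite[Proposition~5.6]{3}---and treats the two classes of sets uniformly; the paper's route is shorter because it outsources that work to the reference and only needs the extra regularity step for open sets. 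The one point you flag, that small sets in the sense of~\cite{3} belong to $\cE_+$, is justified here because the invariant irreducibility measure $\pi$ is (equivalent to) the maximal irreducibility measure, so Nummelin's built-in requirement $\psi(C)>0$ for a small $C$ does yield $\pi(C)>0$.
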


\begin{proof}
Every compact set $A_1\in\cE$ is a small set by
Corollary~\ref{cor1} and $\mu(A_1)<\infty$ by the assumptions of
the corollary; it follows that $0<\mu(A)<\infty$ for any small $A$
\cite[Proposition~5.6]{3}. If a set $B\subset E$ is open, then
$\pi(B)>0$; hence $B$ contains some compact set $A_1\in\cE_+$, and
we conclude that $\mu(B)>0$.
\end{proof}

\section{Strong ratio limit theorems}
\label{s3}

Consider the set $C_0(E)$ of all continuous compactly supported
functions on~$E$. We introduce the following condition.

\begin{condition}\label{A}
For every nonnegative function $f\in C_0(E)$, there exists a $\ga>0$
and a $j\ge1$ such that $v^j\ge\ga(f\pi)$.
\end{condition}

This condition coincides with condition~(13A) in~\cite{9} and
implies, by \cite[Lemma~13.1]{9}, that the random walk $X$ is spread
out, irreducible, and aperiodic. Condition~\ref{A} is necessarily
satisfied if $X$ is spread out and irreducible and has a law $v$
whose support is not contained in any coset of any proper normal
subgroup of $E$ \cite[Chap.~3]{6}.

We also need the following condition.

\begin{condition}\label{B}
There exists a unique (up to a positive factor) $R$-invariant
measure for $X$ taking finite values on compact sets, where $R$ is
again the convergence parameter of~$X$.
\end{condition}

By using \cite[Theorem~5.8 and Proposition~5.6]{3} and
Corollary~\ref{cor1}, we see that Condition~\ref{B} applies, say,
if the random walk in question is spread out, irreducible, and
$R$-recurrent. (The last condition means that $X$ is an
$R$-recurrent Markov chain~\cite{3}.) It is worth noting that, as
\cite[Proposition~5.6]{3} and our Corollary~\ref{cor2} show, the
measure $\nu$ is necessarily absolutely continuous with respect to
$\pi$ under Conditions~\ref{A} and~\ref{B}.

Next, recall that a Borel function $\ph\colon E\lra(0,\infty)$ is
called an \textit{exponential} on $E$ if $\ph(xy)=\ph(x)\ph(y)$ for
any $x,y\in E$ and that Condition~\ref{B} guarantees by
\cite[Theorem~1]{10} that there exists a unique continuous
exponential $\ph$ on $E$ such that
\begin{equation}\label{eq3}
    \int\ph\,dv=\frac1R.
\end{equation}
In what follows, the symbol $\ph$ is used to denote this
exponential.

\begin{theorem}\label{th1}
Let the group $E$ be unimodular, and let the random walk $X$
satisfy Conditions~\rom{\ref{A}} and~\rom{\ref{B}} and have
compactly supported law~$v$. Then there exists a subsequence $\cN$
of positive integers of density $0$ \rom(see~\eqref{eq2}\rom) such
that
\begin{equation}\label{eq4}
    \lim_{\substack{n\to\infty\\n\notin\cN}}
    \frac{P^nf(x)}{P^ng(y)}
    =\frac{\pi(\ph^{-1}f)\ph(x)}{\pi(\ph^{-1}g)\ph(y)}
    =\frac{\nu(f)\ph(x)}{\nu(g)\ph(y)}
\end{equation}
for any $x,y\in E$ and $f,g\in C_0(E)$ except for $g\in C_0(E)$
with $\nu(g)=0$, where $\nu$ is the measure indicated in
Condition~\rom{\ref{B}}. Moreover,
\begin{equation}\label{eq5}
    \lim_{\substack{n\to\infty\\n\notin\cN}}
    \frac{P^{n+m}g(y)}{P^ng(y)}=\frac1{R^m},\qquad y\in E,
\end{equation}
for any positive integer $m$, and the convergence to the limits
in~\eqref{eq4} and~\eqref{eq5} is uniform with respect to $x$ and
$y$ ranging over an arbitrary compact set in~$E$.
\end{theorem}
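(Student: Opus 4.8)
The plan is to reduce the statement to the case $R=1$ already settled in~\cite{9} by means of a ground-state (Doob-type) transform built from the exponential $\ph$. Since $\ph$ is a continuous exponential with $\int\ph\,dv=1/R$ (see~\eqref{eq3}) and $Pf(x)=\int f(xz)\,v(dz)$, the relation $\ph(xz)=\ph(x)\ph(z)$ gives $P\ph=\ph/R$, so that $\ph$ is a positive $R$-harmonic function for $X$. I would use it to define the new measure $\wt v(dz)=R\,\ph(z)\,v(dz)$; then $\wt v(E)=R\int\ph\,dv=1$, and $\wt v$ is again compactly supported because $\ph$ is continuous and positive. Multiplicativity of $\ph$ shows that the random walk $\wt X$ with law $\wt v$ has transition operator $\wt Pf=R\,\ph^{-1}P(\ph f)$, whence $\wt P^nf=R^n\ph^{-1}P^n(\ph f)$ and $\wt p^{(n)}(x,dy)=R^n\ph(x)^{-1}\ph(y)\,p^{(n)}(x,dy)$; in particular $\wt v^{\,j}=R^j\ph\,v^j$ for every $j\ge1$.

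The next step is to check that $\wt X$ satisfies the hypotheses of the $R=1$ theorem of~\cite{9}. Condition~\ref{A} transfers: given a nonnegative $h\in C_0(E)$, apply Condition~\ref{A} for $X$ to $\ph^{-1}h\in C_0(E)$ to obtain $v^j\ge\ga(\ph^{-1}h)\pi$, and multiply through by the density $\ph$ to get $\wt v^{\,j}=R^j\ph v^j\ge R^j\ga\,h\pi$. By \cite[Lemma~13.1]{9} this already makes $\wt X$ spread out, irreducible, and aperiodic. For the convergence parameter, the formula for $\wt p^{(n)}$ shows that on any small relatively compact set $A$ (Corollary~\ref{cor1}) the series $\sum_n\wt r^{\,n}\wt p^{(n)}(x,A)$ and $\sum_n(\wt r R)^n p^{(n)}(x,A)$ have the same radius of convergence, since $\ph$ is bounded above and below on $A$; hence the convergence parameter of $\wt X$ equals $1$, and its associated exponential is $\wt\ph\equiv1$ (indeed $\int1\,d\wt v=1$). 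Finally, using the right-invariance of $\pi$ together with $\ph(x)^{-1}=\ph(z)\ph(xz)^{-1}$, a direct computation gives $\ph^{-1}\pi=R(\ph^{-1}\pi)P$, so $\nu=\ph^{-1}\pi$ is the (unique, by Condition~\ref{B}) $R$-invariant measure for $X$; consequently $\wt\nu:=\ph\nu=\pi$ is $1$-invariant for $\wt X$, and Condition~\ref{B} holds for $\wt X$ with the Haar measure as its invariant measure.

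With all hypotheses verified, the $R=1$ theorem of~\cite{9} applied to $\wt X$ yields a density-$0$ set $\cN$ such that $\wt P^n\wt f(x)/\wt P^n\wt g(y)\to\pi(\wt f)/\pi(\wt g)$ and $\wt P^{n+m}\wt g(y)/\wt P^n\wt g(y)\to1$ as $n\to\infty$, $n\notin\cN$, uniformly on compacta, whenever $\pi(\wt g)\ne0$. To return to $X$, take $f,g\in C_0(E)$ and set $\wt f=\ph^{-1}f$, $\wt g=\ph^{-1}g$, which lie in $C_0(E)$; then $\ph\wt f=f$, so $\wt P^n\wt f(x)=R^n\ph(x)^{-1}P^nf(x)$, and the factors $R^n$ cancel in the ratio, leaving $P^nf(x)/P^ng(y)=(\ph(x)/\ph(y))\,\wt P^n\wt f(x)/\wt P^n\wt g(y)$. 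Passing to the limit reproduces~\eqref{eq4}, with $\pi(\wt f)=\pi(\ph^{-1}f)=\nu(f)$ and the exceptional case $\nu(g)=0$ matching $\pi(\wt g)=0$. Likewise $P^{n+m}g(y)/P^ng(y)=R^{-m}\wt P^{n+m}\wt g(y)/\wt P^n\wt g(y)\to R^{-m}$, which is~\eqref{eq5}; uniformity on compact sets is inherited because $\ph$ is continuous, hence bounded above and away from $0$ there.

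The main obstacle, and the step deserving the most care, is the verification that the transformed walk inherits \emph{exactly} the hypotheses of the $R=1$ theorem --- above all that its convergence parameter equals $1$ and that its invariant measure is the Haar measure $\pi$. Pinning down the value $1$ relies on the boundedness of $\ph$ on small relatively compact sets, and identifying $\wt\nu=\pi$ rests on the computation $\nu=\ph^{-1}\pi$, which in turn uses the right-invariance of $\pi$. Once these are in place, everything else is either bookkeeping with the factors $R^n$ and $\ph$ or a direct appeal to~\cite{9}; note that unimodularity of $E$ enters only through that appeal, since $\wt X$ lives on the same group~$E$.
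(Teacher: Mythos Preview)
Your proposal is correct and follows essentially the same route as the paper: both reduce to the $R=1$ case by passing to the transformed walk $\wt X$ with law $\wt v=R\ph v$, verify that $\wt X$ has convergence parameter $1$ with unique invariant measure $\pi$, invoke the $R=1$ result of~\cite{9}, and then translate back via $\wt P^nf=R^n\ph^{-1}P^n(\ph f)$. The only cosmetic differences are that the paper cites~\cite{10} for $\nu=\ph^{-1}\pi$ and for the similarity properties rather than recomputing them, derives the time-shift limit~\eqref{eq5} explicitly from the ratio limit by taking $f=\wt P^mg$ (using that $\wt v$ is compactly supported so $\wt P^mg\in C_0(E)$), and adds a short paragraph reducing signed $f,g$ to the nonnegative case.
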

\begin{proof}
Without loss of generality, we assume in what follows that
$\nu=\ph^{-1}\pi$ \cite[Theorem~1]{10}. Consider the spread out
random walk $\wt X$ on $E$ with the law $\wt v=R\ph v$. (The
relation $\wt v(E)=1$ follows from~\eqref{eq3}.) Just as
in~\cite[Sec.~4]{10}, we find that the transition operator $\wt P$
of this random walk is related to $P$ by
\begin{equation}\label{eq6}
    \wt P^nf=R\ph^{-1}P^n(f\ph),\qquad n\ge1,
\end{equation}
where $f$ is an arbitrary bounded Borel function on~$E$. Thus, just
as in~\cite{10}, the operator $\wt P$ is obtained from $P$ by a
well-known similarity transformation~\cite{3}, whence the random
walk proves to be irreducible and have the convergence parameter
$\wt R=1$.

Just as for an arbitrary random walk on $E$, the measure $\pi$ is
invariant for $\wt X$. Moreover, it is the unique (up to a factor)
measure that is invariant for $\wt X$, is absolutely continuous
with respect to $\pi$, and takes finite values on compact sets.
Indeed, assume that some measure $\nu_1$ has the last three
properties but does not coincide with any of the measures $\a\pi$,
$\a>0$. The one can readily establish the invariance of the measure
$\ph^{-1}\nu_1$ for $X$ with the use of~\eqref{eq3}. However, this
measure differs from any of the measures $\a\ph^{-1}\pi=\a\nu$,
$\a>0$, and by Condition~\ref{B} this is only possible if
$\nu_1(A)=\infty$ for some compact set $A\subset E$. Thus, we
arrive at a contradiction with the choice of $\nu_1$, and hence we
have established the desired uniqueness of the measure~$\pi$.

Next, by Corollary~\ref{cor2}, every measure invariant for $\wt X$
is dense in $E$, that is, takes nonzero values on open sets in $E$.
This, together with the preceding discussion, permits us to use
Theorem~13.1 in~\cite{9} and find a subsequence $\cN$ of density
$0$ of positive integers such that
\begin{equation}\label{eq7}
    \lim_{\substack{n\to\infty\\n\notin\cN}}
    \frac{\wt P^nf(x)}{\wt P^ng(y)}
    =\frac{\pi(f)}{\pi(g)}
\end{equation}
provided that the functions $f$ and $g$ are nonnegative (which is
from now on assumed until we drop this condition) and chosen in
accordance with the theorem to be proved (which is assumed until
the end of the proof). As to the character of the convergence to
the limit in~\eqref{eq7}, it is as indicated in the theorem.

Consider the special case of $f=\wt P^mg$, $m\ge1$, where the
continuity of $\wt P^mg$ is guaranteed by that of $g$ and the
Feller property of random walks \cite[Exercise 5.14]{6}, while the
compactness of its support follows from that of the support of the
measure $v^m$. Relation~\eqref{eq7} with $x=y$ can be written as
\begin{equation}\label{eq8}
    \lim_{\substack{n\to\infty\\n\notin\cN}}
    \frac{\wt P^{n+m}g(y)}{\wt P^ng(y)}=1,
\end{equation}
because the measure $\pi$ is invariant for~$\wt X$.

Now we return to~\eqref{eq7} and use~\eqref{eq6} to derive the
desired relations~\eqref{eq4} with $\ph f$ and $\ph g$ instead of
$f$ and $g$, respectively, from~\eqref{eq8}. Thus, to
obtain~\eqref{eq4} in the desired form, we should apply the version
of~\eqref{eq4} just obtained to $\ph^{-1}f$ and $\ph^{-1}g$,
respectively, instead of~$f$ and~$g$. In a similar way, we
derive~\eqref{eq5} from~\eqref{eq8}.

It remains to get rid of the requirement that $f$ and $g$ are
nonnegative. Assuming that this requirement is violated, we
represent $f$ and $g$ as $f=f_1-f_2$ and $g=g_1-g_2$, where
$f_1(x)=\max\{0,f(x)\}$, $f_2(x)=-\min\{0,f(x)\}$, and $g_1$ and
$g_2$ are defined in a similar way. If $\nu(g_1)>0$, then, by using
what was just proved, we successively compute the limits of the
ratios
\begin{equation*}
    \frac{P^nf_i(x)}{P^ng_1(y)},\quad i=1,2,\quad
    \frac{P^nf(x)}{P^ng_1(y)},\qquad
    \frac{P^ng_1(x)}{P^ng(y)}
\end{equation*}
as $n\to\infty$ ($n\notin\cN$) and then the limit indicated
in~\eqref{eq4}, after which relations~\eqref{eq4} become obvious.
The argument for~\eqref{eq4} with $\nu(g_2)>0$ as well as
for~\eqref{eq5} is equally simple. The proof of the theorem is
complete.
\end{proof}

\begin{remark}\label{rk1}
In the framework of traditional SRLT, one can suggest a natural
analog of relations~\eqref{eq4} (see Eq.~\eqref{eq10} below).
Indeed, under appropriate assumptions including those imposed in
Theorem~1, the paper~\cite{1} permits one to apply the relation
\begin{equation}\label{eq9}
    \lim_{n\to\infty}\frac{P^nf(x)}{P^ng(x)}=\frac{\nu(f)}{\nu(g)}
\end{equation}
for $x\in E$. (We retain the same notation as in
Theorem~\ref{th1}.) If we take some $x,y\in E$ and set $z=yx^{-1}$
and $g_z(u)=g(zu)$, $u\in E$, then simple calculations give
$P^ng_z(x)=P^ng(y)$ and
$\nu(g_z)=\ph(z)\nu(g)=\ph(y)\ph(x^{-1})\nu(g)$. Hence
Eq.~\eqref{eq9} with $g$ replaced by $g_z$ shows that
\begin{equation}\label{eq10}
    \lim_{n\to\infty}\frac{P^nf(x)}{P^ng(y)}
        =\frac{\ph(x)\nu(f)}{\ph(y)\nu(g)}.
\end{equation}
Thus, we have obtained the desired analog of Eqs.~\eqref{eq4}.
\end{remark}

The following assertion again uses the measure~$\nu$ occurring in
Condition~\ref{B}, but the sequence $\cN$ may differ from the one
considered in Theorem~\ref{th1}.

\begin{theorem}\label{th2}
Let the assumptions of Theorem~\ref{th1} be satisfied. Then there
exists a sequence $\cN$ of positive integers of density~$0$ such
that
\begin{align}\label{eq11}
    \lim_{\substack{n\to\infty\\n\notin\cN}}
           \frac{\ka(P^nf)}{\mu(P^ng)}
          &=\frac{\ka(\ph)\pi(\ph^{-1}f)}{\mu(\ph)\pi(\ph^{-1}g)}
          =\frac{\ka(\ph)\nu(f)}{\mu(\ph)\nu(g)},\\ \label{eq12}
    \lim_{\substack{n\to\infty\\n\notin\cN}}
           \frac{\ka(P^{n+m}g)}{\mu(P^ng)}
          &=\frac1{R^m},\qquad m\ge1,
\end{align}
where $f$ and $g$ are bounded Borel functions defined on~$E$ and
vanishing outside some compacts set, $\nu(g)\ne0$, and the
probability measures $\mu$ and $\ka$ defined on $\cE$ have the form
indicated in Proposition~\rom{\ref{p1}\,(a)}.
\end{theorem}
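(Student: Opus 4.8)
The plan is to run the argument of Theorem~\ref{th1} almost verbatim, the only change being that its pointwise ingredient \eqref{eq7} is replaced by a measure-integrated one. As there, I would pass to the spread out walk $\wt X$ with law $\wt v=R\ph v$, for which $\wt R=1$, the operators $P$ and $\wt P$ are linked by \eqref{eq6}, and $\pi$ is its invariant measure, dense and unique up to a factor in the sense established in that proof. The key preliminary observation is that, by Proposition~\ref{p1}\,(a), the functions $f,g$ and the measures $\ka,\mu$ of the theorem are small for $X$; since $\wt X$ arises from $X$ by the similarity transform through the continuous positive exponential $\ph$, and since $\ph,\ph^{-1}$ are bounded on any compact set, the transformed data $\ph^{-1}f,\ph^{-1}g$ and $\ph\ka,\ph\mu$ are again of the form described in Proposition~\ref{p1}\,(a) and hence small for $\wt X$.

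I would then apply Theorem~13.1 of~\cite{9} to $\wt X$ in its integrated form, comparing $\alpha(\wt P^n u)$ with $\beta(\wt P^n w)$ in place of the pointwise ratio \eqref{eq7}. Because $\wt R=1$ and $\wt X$ is irreducible with invariant measure $\pi$, its invariant function is the constant~$1$, so this furnishes a density-$0$ set $\cN$ (possibly different from that of Theorem~\ref{th1}) with
\[
\lim_{\substack{n\to\infty\\n\notin\cN}}\frac{\alpha(\wt P^n u)}{\beta(\wt P^n w)}=\frac{\alpha(E)\,\pi(u)}{\beta(E)\,\pi(w)}
\]
for small measures $\alpha,\beta$ and small functions $u,w$ with $\pi(w)\neq0$, together with the measure analog of \eqref{eq8},
\[
\lim_{\substack{n\to\infty\\n\notin\cN}}\frac{\beta(\wt P^{n+m}w)}{\beta(\wt P^n w)}=1,\qquad m\ge1.
\]

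Taking $\alpha=\ph\ka$, $\beta=\ph\mu$, $u=\ph^{-1}f$, $w=\ph^{-1}g$ and rewriting $\ka(P^nf)$ and $\mu(P^ng)$ by means of \eqref{eq6}, the powers of $R$ in numerator and denominator cancel, and the identities $\alpha(E)=\ka(\ph)$, $\beta(E)=\mu(\ph)$, $\pi(\ph^{-1}f)=\nu(f)$, $\pi(\ph^{-1}g)=\nu(g)$ (recall $\nu=\ph^{-1}\pi$) turn the first display into~\eqref{eq11}; here $\nu(g)\neq0$ ensures $\pi(w)\neq0$, so the denominator is admissible, while $\mu(\ph)>0$ because $\ph>0$. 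Relation~\eqref{eq12}, being the integrated counterpart of~\eqref{eq5}, is obtained by the identical manipulation with $n$ replaced by $n+m$ in the numerator: this produces the factor $R^{-m}$, and the residual ratio tends to $1$ by the second display, exactly as \eqref{eq5} was derived from \eqref{eq8}.

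The one substantive point, beyond the routine bookkeeping of $\ph$ and the powers of $R$, is the verification that the integrated form of~\cite[Theorem~13.1]{9} does apply to the transformed data, i.e., that smallness is preserved under the similarity transform and that $\ph^{-1}f,\ph^{-1}g,\ph\ka,\ph\mu$ indeed fall under Proposition~\ref{p1}\,(a). This is precisely where the weaker hypothesis of Theorem~\ref{th2}, namely that $f$ and $g$ need only be bounded Borel rather than continuous as in Theorem~\ref{th1}, is absorbed: what \cite[Theorem~13.1]{9} requires of its test functions and measures is smallness rather than continuity, and Proposition~\ref{p1}\,(a) supplies exactly that.
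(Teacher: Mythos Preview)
Your route is genuinely different from the paper's. You reduce Theorem~\ref{th2} directly to the $R=1$ case by passing to the transformed walk $\wt X$ and then invoking an integrated SRLT from~\cite{9} for the transformed data $\ph\ka,\ph\mu,\ph^{-1}f,\ph^{-1}g$. The paper does \emph{not} do this: it bootstraps from the pointwise Theorem~\ref{th1}. Concretely, it first assumes $f,g\in C_0(E)$, integrates the uniform-on-compacts convergence \eqref{eq4} against $\ka$ and then against $\mu$ to obtain~\eqref{eq11}, and only afterwards passes from continuous to bounded Borel $f,g$ by $L^1(\pi)$-approximation together with the liminf inequality \cite[Eq.~(12.8)]{9}, arguing symmetrically to turn the one-sided bound~\eqref{eq13} into a limit. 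Relation~\eqref{eq12} is then read off from~\eqref{eq11} with $f=P^mg$ and the $R$-invariance of~$\nu$.

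Your approach is more economical, but its load-bearing step is the claim that \cite[Theorem~13.1]{9} has an ``integrated form'' valid for small test functions and small measures. The paper's own usage suggests otherwise: in the proof of Theorem~\ref{th1}, \cite[Theorem~13.1]{9} is applied to obtain the \emph{pointwise} relation~\eqref{eq7} for $f,g\in C_0(E)$; the integrated analogue in~\cite{9} is a separate statement (the paper refers to Theorems~12.4 and~13.2 there), and even so the author re-runs the approximation argument rather than citing it outright. So your assertion that ``what \cite[Theorem~13.1]{9} requires of its test functions \dots\ is smallness rather than continuity'' is precisely the point that needs checking, and it is where the paper does real work. If \cite[Theorem~13.2]{9} indeed delivers the integrated SRLT for bounded Borel functions and measures of the form $h\pi$ with $h\in S_0$, your reduction goes through with the corrected citation; otherwise you would still need the continuous-to-Borel approximation step that the paper carries out.
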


\begin{proof}
Assume for now that the functions occurring in the theorem are
continuous. By applying~\eqref{eq4} and the standard rule of
passing to the limit in the integrand, we find that
\begin{equation*}
    \frac{\ka(P^nf)}{P^ng}\lra\frac{\nu(f)\ka(\ph)}{\nu(g)\ph}
\end{equation*}
everywhere in $E$ as $n\to\infty$ ($n\notin\cN$), where $\cN$ is so
far chosen in the same way as in Theorem~\ref{th1}. From this, we
derive~\eqref{eq11} by a similar argument provided that
$\nu(f)\ne0$. If, on the other hand, $\nu(f)=0$, then, in view of
what has already been proved, we can use~\eqref{eq11} with the
functions $f_m=f+\frac1mg$, $m\ge1$, instead of $f$ and then pass
to the limits as $m\to\infty$ to obtain~\eqref{eq11} again with the
original function~$f$. Thus, \eqref{eq11} applies to any functions
$f$ and $g$ of the form we have just spoken of.

Now we fix Borel functions $f,g\ge0$ and measures $\mu$ and $\ka$
satisfying the assumptions of the theorem and find continuous
functions $f_n,g_n\ge0$ vanishing outside a common compact set and
satisfying $\pi(|f-f_n|)\to0$ and $\pi(|g-g_n|)\to0$ as
$n\to\infty$. It follows from the first relation
in~\cite[Eq.~(12.8)]{9} that
\begin{equation*}
    \liminf \frac{\ka(P^nf)}{\mu(P^ng)}
       \ge\frac{1-\e}{1+\e}\liminf\frac{\ka(P^nf_m)}{\mu(P^ng_m)}
\end{equation*}
for any given $\e\in(0,1)$ and all sufficiently large $m\ge1$.
(Here and in the following, we assume that a subsequence $\cN$ of
positive integers of density~$0$ is chosen in an appropriate
way~\cite{9} and that the lower limits correspond to $n\to\infty$,
$n\notin\cN$.) According to what was said at the beginning of the
proof, the second of these lower limits is equal to the ratio
\begin{equation*}
    \frac{\ka(\ph)\nu(f_m)}{\mu(\ph)\nu(g_m)},
\end{equation*}
so that if we first let $m\to\infty$ and then let $\e\to0$, then we
conclude from the last inequality that
\begin{equation}\label{eq13}
    \liminf \frac{\ka(P^nf)}{\mu(P^ng)}
       \ge\frac{\ka(\ph)\nu(f)}{\mu(\ph)\nu(g)}.
\end{equation}

Just in the same way, for $\nu(f)>0$ the similar lower limit of the
ratio $\mu(P^nf)/\ka(P^nf)$ cannot be less than $1/\dt$, where
$\dt$ is equal to the right-hand side of~\eqref{eq13}. From this
and~\eqref{eq13}, we readily derive~\eqref{eq11} (cf.~the proof of
Theorems~12.4 and~13.2 in~\cite{9}). If $f=0$ $\nu$-a.e., then, to
achieve the same goal, it suffices to compare the results of the
substitution of the functions $f+g$ and $g$ for $f$
into~\eqref{eq11}.

In the special case of $f=P^mg$, \eqref{eq11} implies \eqref{eq12}
(cf.~the proof of~\eqref{eq8}; here it is recommended to take into
account the $R$-invariance of the measure~$\nu$).

Finally, just as in the preceding proof, one can readily drop the
requirement that $f$ and $g$ be nonnegative, thus completing the
proof of Theorem~\ref{th2}.
\end{proof}

\begin{remark}\label{rk2}
If $f,g\in C_0(E)$ and $\nu(g)=0$, then one can apply
relations~\eqref{eq11} and~\eqref{eq12} with arbitrary compactly
supported probability measures $\mu$ and $\ka$ defined on $\cE$. To
prove this, it suffices to note that the first paragraph of the
proof of Theorem~\ref{th2} does not use the possibility of
representation of these measures in the form indicated in the
theorem.
\end{remark}

In conclusion, it remains to explain why one cannot omit the
assumption that the group $E$ is unimodular from Theorems~\ref{th1}
and~\ref{th2}.

\begin{theorem}\label{th3}
If the group $E$ is not unimodular, then there does not exist an
irreducible spread out random walk on $E$ with any convergence
parameter $R\ge1$ possessing a unique \rom(up to a factor\rom)
$R$-invariant measure taking finite values on compact sets.
\end{theorem}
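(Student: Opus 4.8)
The plan is to argue by contradiction. Suppose that for some $R\ge1$ there is an irreducible spread out walk $X$ on the non-unimodular group $E$ with a unique (up to a factor) $R$-invariant measure $\nu$ finite on compact sets. Since $E$ is non-unimodular, its modular function $\Delta$ (the continuous exponential relating the two Haar measures, $\lambda=\Delta\pi$ with $\pi$ the right Haar measure) is not identically $1$. As a first structural step I would record how $\nu$ transforms under the symmetries of $X$. Because $X_{n+1}=X_nZ_{n+1}$, the walk commutes with left translations, so for each $s\in E$ the left translate $\nu_s(A)=\nu(s^{-1}A)$ is again $R$-invariant; the assumed uniqueness then gives $\nu_s=c(s)\nu$ with $c$ a continuous exponential. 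By \cite{10} we may take $\nu=\ph^{-1}\pi$, and a direct computation using the transformation rule of $\pi$ under left translation yields $c=\ph\Delta$, where $\ph$ is the exponential of \eqref{eq3}. Thus uniqueness already pins $\nu$ down within the family of exponential measures, and $\Delta$ enters through the multiplier $c$.

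Next I would carry out the similarity reduction from the proof of Theorem~\ref{th1}: passing to $\wt X$ with law $\wt v=R\ph v$ via \eqref{eq6} makes the parameter $\wt R=1$, and $\mu\mapsto\ph^{-1}\mu$ is a bijection between $1$-invariant measures of $\wt X$ and $R$-invariant measures of $X$. Hence the hypothesis is equivalent to saying that $\wt X$ has $\pi$ as its only (up to a factor) $1$-invariant measure finite on compact sets. Writing such a measure as $h\pi$ and using the right invariance of $\pi$, one checks that $1$-invariance of $h\pi$ is equivalent to $h$ being a positive harmonic function for the reversed walk $\check{\wt X}$ (with law $\check{\wt v}$). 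So the hypothesis is equivalent to the \emph{strong Liouville property} for $\check{\wt X}$: its only positive harmonic functions are the constants.

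It remains to contradict this for non-unimodular $E$, and this is the crux. First I would note that $E$ supports no recurrent spread out walk (a group carrying a recurrent walk has polynomial growth and is in particular unimodular), so $\check{\wt X}$ is transient. The hard part will be to manufacture from this transient dynamics a \emph{non-constant} positive harmonic function for $\check{\wt X}$—for instance a Martin kernel $h(w)=\lim_k G(w,x_k)/G(e,x_k)$ along a sequence $x_k\to\infty$—and to use $\Delta\not\equiv1$ to guarantee that the limit is genuinely non-constant (equivalently, that the minimal Martin boundary of $\check{\wt X}$ has more than one point). Producing this second positive harmonic function, hence a second $R$-invariant measure $h\pi$ not proportional to $\pi$, contradicts uniqueness and forces $\Delta\equiv1$.

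I expect this last step to be the main obstacle, because the easy candidates are exhausted by the structure already in hand. Every exponential measure $\psi\pi$ is $r$-invariant only for $r=(\int\psi^{-1}\,dv)^{-1}$, and the minimality of $\ph$ forces $\ph^{-1}$ to be the unique exponential giving the value $R$; thus the whole exponential family, including the left Haar measure $\Delta\pi$, collapses back to $\nu$. Moreover, since $\wt R=1$ makes the minimizing exponential of $\wt X$ trivial, the projection of $\wt X$ along $\log\Delta$ is centred, so no non-constant harmonic function can be pulled back from the abelian quotient. The required second harmonic function must therefore come from the genuinely non-abelian, non-unimodular geometry of $E$, and the technical heart of the argument is to extract it—most naturally through the Martin boundary of the transient walk $\check{\wt X}$—and to verify that its non-constancy is equivalent to $\Delta\not\equiv1$.
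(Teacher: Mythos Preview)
Your reduction is sound up through the strong Liouville reformulation: after passing to $\wt X$ with $\wt R=1$, the uniqueness of $\pi$ among $1$-invariant measures is indeed equivalent to the reversed walk $\check{\wt X}$ having only constant positive harmonic functions. The gap is that you never close the argument. You reduce the theorem to the claim ``a spread out irreducible walk on a non-unimodular group cannot have the strong Liouville property,'' but then you simply announce that this is ``the crux'' and ``the main obstacle'' and propose to extract a non-constant harmonic function from the Martin boundary. That is not a proof; it is a hope. You yourself note that the exponential candidates and the abelianisation via $\log\Delta$ both collapse, so nothing concrete remains. Transience alone does not force a non-trivial minimal Martin boundary (centered walks on $\mathbb{R}^d$, $d\ge3$, are transient with only constant positive harmonic functions), and the assertion that the ``genuinely non-abelian, non-unimodular geometry'' will produce a second harmonic function is exactly the content of the theorem, not an input you can invoke.

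The paper's proof avoids Martin boundary theory entirely and instead feeds the uniqueness hypothesis back into the SRLT machinery of~\cite{9}. After reducing to $R=1$ as you do, it applies \cite[Eq.~(13.13)]{9} (whose derivation does not use unimodularity) to obtain, for $g\in C_0(E)$ with $\pi(g)>0$ and along a density-zero exceptional set,
\[
\lim \frac{P^n g(x)}{P^n g(e)} = \frac{\pi(g_x)}{\pi(g)} = \Delta(x),
\]
where $g_x(u)=g(x^{-1}u)$. Equicontinuity lemmas from~\cite{9} upgrade this to uniform convergence on compacta, which lets one pass the limit inside the integral defining $P^{n+1}g(x)/P^ng(x)$ to get
\[
\lim \frac{P^{n+1}g(x)}{P^n g(x)} = \Delta(x)\int\Delta\,dv = r^{-1}\Delta(x).
\]
But \cite[Lemma~13.2]{9} forces this limit to be $\ge 1$, so $\Delta(x)\ge r$ for all $x$; applying this at $x^{-1}$ gives $\Delta(x)\le r^{-1}$. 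Hence the exponential $\Delta$ is bounded on $E$, which forces $\Delta\equiv 1$. The whole argument is a short computation once the SRLT of~\cite{9} is available; no boundary theory is needed.
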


\begin{proof}
We restrict ourselves to the case of $R=1$, because otherwise one
could consider the random walk $\wt X$ with convergence parameter
$\wt R=1$ instead of~$X$ (see the proof of Theorem~\ref{th1}).

Let $X$ be a random walk on $X$ with the corresponding properties
possessing a unique (up to a factor) invariant measure taking
finite values on compact sets. Clearly, this measure coincides with
$\pi$ up to a factor. Next, assume that functions $f,g\ge0$ belong
to the family $C_0(E)$ and $\pi(g)\ne0$. By~\cite[Eq.~(13.13)]{9},
\begin{equation}\label{eq14}
    \lim\frac{P^nf(e)}{P^ng(e)}=\frac{\pi(f)}{\pi(g)},
\end{equation}
where $\cN$ is an appropriate subsequence of positive integers of
density~$0$, $e$ is the identity element of the group, and the
limit in \eqref{eq14}--\eqref{eq16} is taken as $n\to\infty$,
$n\notin\cN$; we point out that, in contrast to the text adjacent
to the proof of~\cite[Eq.~(13.13)]{9}, the derivation
of~\cite[Eq.~(13.13)]{9} itself does not use the assumption that
$E$ is unimodular. Hence, in view of~\cite[Sec.~15.27(c)]{7}, we
obtain
\begin{equation}\label{eq15}
    \lim\frac{P^ng(x)}{P^ng(e)}=\lim\frac{P^ng_x(e)}{P^ng(e)}
        =\frac{\pi(g_x)}{\pi(g)}=\Delta(x)
\end{equation}
for $x\in E$, where $\Delta$ is the modular function of $E$ and the
translate $g_x$ of $g$ is defined by analogy with $g_z$ (see
Remark~\ref{rk1}).

Now take a compact set $F\subset E$. By~\cite[Lemmas~13.3
and~13.4]{9}, the family of functions $\{P^ng/P^ng(e); n\ge n_1\}$
with sufficiently large positive integer $n_1$ is equicontinuous on
$F$. Consequently, the convergence in~\eqref{eq15} is uniform with
respect to $x\in F$. Hence, by taking the support of $v$ for $F$,
we obtain, in view of~\eqref{eq15},
\begin{multline}\label{eq16}
    \lim\frac{P^{n+1}g(x)}{P^{n}g(x)}
    =\lim\int\frac{P^{n}g(xy)}{P^{n}g(x)}v(dy)
    \\
    =\int\Delta(xy)v(dy)=\Delta(x)\int\Delta\,dv
    =r^{-1}\Delta(x),
\end{multline}
because $\Delta$ is a continuous exponential on $E$
\cite[Theorem~15.11]{7}; here $r=[\int\Delta\,dv]^{-1}$.

By~\cite[Lemma~13.2]{9}, the first of the limits in~\eqref{eq16}
cannot be less than~$1$, and hence \eqref{eq16}~implies the
inequality $\Delta(x)\ge r$, $x\in E$. At the same time, we have
$\Delta(x^{-1})=\Delta^{-1}(x)\ge r$, so that $r\le\Delta(x)\le
r^{-1}$, $x\in E$, whence it follows that $\Delta(x)\equiv1$.
(Indeed, if $\Delta(x)\ne1$ for some $x\in E$, then, for
sufficiently large positive integer $n$, either
$\Delta(x^n)=[\Delta(x)]^n<r$ or $\Delta(x^n)>r^{-1}$, but both of
the last inequalities are excluded by our argument.) Thus, we
have established that the group $E$ is unimodular
(cf.~\cite[Remark~15.12]{7}).
\end{proof}

\providecommand{\bysame}{\leavevmode\hbox to3em{\hrulefill}\thinspace}

\end{document}